\newtheorem{theorem}{Theorem}[section]
\newtheorem{lemma}[theorem]{Lemma}
\newcommand{\Aut}{\hbox{{\rm Aut}}}
\newcommand{\Sym}{{\rm Sym}}
\newcommand{\Alt}{{\rm Alt}}
\newcommand{\Hom}{\hbox{{\rm Hom}}}
\newcommand{\Cay}{\hbox{Cay}}
\newcommand{\Cov}{\hbox{Cov}}
\newcommand{\supp}{\hbox{Supp}}
\newcommand{\V}{{\rm{V}}}
\newcommand{\A}{\rm{A}}
\newcommand{\K}{\rm{K}}
\newcommand{\Q}{\rm{Q}}
\newcommand{\p}{\wp}
\newcommand{\tB}{\tilde{B}}
\newcommand{\tA}{\tilde{A}}
\newcommand{\NN}{\mathbb{N}}
\newcommand{\ZZ}{\mathbb{Z}}
\newcommand{\FF}{\mathbb{F}}
\newcommand{\MK}{{\mathcal{M}}}
\title{Three  local actions in $6$-valent arc-transitive graphs}
\author{Ademir Hujdurovi\'{c}}
\author{Primo\v{z} Poto\v{c}nik}
\author{Gabriel Verret}
\address{Ademir Hujdurovi\'{c}, University of Primorska, UP IAM, Muzejski trg 2, SI-6000 Koper, Slovenia.\newline
\indent Also affiliated with: University of Primorska, UP FAMNIT, Glagolja\v ska 8, SI-6000 Koper, Slovenia.
}
\email{ademir.hujdurovic@upr.si}	
\address{Primo\v{z} Poto\v{c}nik, Faculty of Mathematics and Physics, University of Ljubljana, Jadranska 21, SI-1000 Ljubljana, Slovenia.\newline
\indent Also affiliated with: Institute of Mathematics, Physics and Mechanics, Jadranska 19, SI-1000 Ljubljana, Slovenia.
}
\email{primoz.potocnik@fmf.uni-lj.si}
\address{Gabriel Verret\\
Department of Mathematics, The University of Auckland\\
Private Bag 92019, Auckland 1142, New Zealand.}
\email{g.verret@auckland.ac.nz}
\begin{document}

\begin{abstract}
It is known that there are precisely three transitive permutation groups of degree $6$ that admit an invariant partition with three parts of size $2$ such that the kernel of the action on the parts has order $4$; these groups are called $A_4(6)$, $S_4(6d)$ and $S_4(6c)$. For each $L\in \{A_4(6), S_4(6d), S_4(6c)\}$, we construct an infinite family of finite connected $6$-valent graphs $\{\Gamma_n\}_{n\in \NN}$ and arc-transitive groups $G_n \le \Aut(\Gamma_n)$ such that the permutation group  induced by the action of the vertex-stabiliser $(G_n)_v$ on the neighbourhood of a vertex $v$ is permutation isomorphic to $L$, and such that $|(G_n)_v|$ is exponential in $|\V(\Gamma_n)|$. These three groups were the only transitive permutation groups of degree at most $7$ for which the existence of such a family was undecided. In the process, we construct an infinite family of cubic $2$-arc-transitive graphs such that the dimension of the $1$-eigenspace over the field of order $2$ of the adjacency matrix of the graph grows linearly with the order of the graph.
\end{abstract}

\maketitle

\section{Introduction}

All the graphs in this paper are finite, connected, simple and undirected. An \emph{arc} of a graph $\Gamma$ is an ordered pair of adjacent vertices. The set of vertices of  $\Gamma$ is denoted by $\V(\Gamma)$ and the set of arcs by $\A(\Gamma)$. The {\em automorphism group $\Aut(\Gamma)$} of $\Gamma$ is the group of  all permutations  of   $\V(\Gamma)$ preserving $\A(\Gamma)$.  We say that $\Gamma$ is \emph{$G$-arc-transitive} if $G$ is a subgroup of $\Aut(\Gamma)$ acting transitively on $\A(\Gamma)$.

One of the central questions in the study of $G$-arc-transitive graphs is to find good upper bounds on the order of an arc-stabiliser $G_{uv}$ under certain hypotheses. This problem was first considered in Tutte's celebrated work~\cite{T48}, where it was proved that, in the $3$-valent case, $|G_{uv}|\leq 16$. 

Bounding the order of the arc-stabiliser plays a crucial role in many problems, such as when constructing complete lists of graphs of a prescribed symmetry type \cite{newlist,ConDob02,cubicSS,CubicCensus,HATcensus} (which have numerous applications themselves), proving asymptotic results regarding the number of graphs of a particular type \cite{twogroups,enum}, obtaining classification results see \cite{ConLiPot,ConNed,MorSVer,five}, or various other problems in algebraic graph theory and elsewhere \cite{caprace,lifts,pablo2,Tor}.

When bounding $|G_{uv}|$ in a family of $G$-arc-transitive graphs by a constant is not possible, it is still worthwhile to bound it by a suitable  function of the order of the graph. If such a function (even if not constant) grows slowly enough, many of the above mentioned applications are still possible. For example, it was proved in \cite{lost} that there exists a sublinear function $f(n)$ such that for every connected $G$-arc-transitive $4$-valent graph $\Gamma$, not belonging to a well-understood exceptional family, the inequality $|G_{uv}| \le f(|\V(\Gamma)|)$ holds. This fact was then used both to construct a complete list of all $4$-valent arc-transitive graphs on at most $640$ vertices (and also a complete list of all $3$-valent vertex-transitive graphs on at most $1280$ vertices \cite{CubicCensus,recipe}) as well as prove an interesting asymptotic result pertaining to the number of such graphs up to a given order~\cite{enum}.

The problem of bounding $|G_{uv}|$ in a family of $G$-arc-transitive graphs is typically considered in terms of the {\em local action}, which we now introduce. Let $\Gamma$ be a connected finite $G$-arc-transitive graph and let $G_v^{\Gamma(v)}$ denote the permutation group induced by the action of the vertex-stabiliser  $G_v$ on the neighbourhood $\Gamma(v)$ of a vertex $v$. Since $G$ is transitive on $\A(\Gamma)$ and $\V(\Gamma)$, the group $G_v^{\Gamma(v)}$ is transitive and (up to permutation isomorphism) independent of the choice of $v$. If $L$ is a permutation group which is permutationally equivalent to $G_v^{\Gamma(v)}$, we say that the pair $(\Gamma,G)$ is \emph{locally-$L$}, or that $(\Gamma,G)$ has \emph{local action} $L$.

A transitive group $L$ is called {\em graph-restrictive} \cite{Verret} if there exists a constant $c_L$ such that, for every locally-$L$ pair $(\Gamma,G)$ and $(u,v)\in\A(\Gamma)$, we have $|G_{uv}|\leq c_L$. Tutte's result~\cite{T48} can then be rephrased as saying that transitive groups of degree $3$ are graph-restrictive (with corresponding constant $16$),  while the famous Weiss conjecture \cite{Weiss} claims that every primitive permutation group is graph-restrictive; see \cite{restrictive} for a survey of results on graph-restrictiveness and \cite{caprace,GiuMor1,GiuMor2,pablo} for more recent results.

Given a transitive group $L$ that is not graph-restrictive, one is naturally led to wonder about the existence of a ``tame'' function $f_L\colon \NN \to \NN$ such that, for every locally-$L$ pair $(\Gamma,G)$, we have 
\begin{equation}
\label{eq:fL}
|G_{uv}| \le f_L(|\V(\Gamma)|).
\end{equation}
 It is an easy exercise to show that, for every $L$, there is an exponential function $f_L$ satisfying (\ref{eq:fL}); see \cite[Theorem 5]{frestrictive}, for example. On the other hand, if there exists an exponential function $g_L$ and an infinite family of locally-$L$ pairs $\{(\Gamma_n,G_n)\}_{n\in \NN}$ such that $|(G_n)_{uv}|\geq g_L(|\V(\Gamma_n)|)$, then a function $f_L$ satisfying (\ref{eq:fL}) cannot grow slower than every exponential function;
 in this case, we say that  $L$ is of {\em exponential type}.
 In a similar way, one can define other types. For example if in (\ref{eq:fL}) $f_L$ can be chosen to be
constant, then $L$ is of {\em constant type} (and $L$ is then graph-restrictive).
Similarly, if $L$ is not of constant type but (\ref{eq:fL}) is satisfied with $f_L(n) = n^\alpha$ , for some $\alpha >0$, then $L$ is of {\em polynomial type}. Finally, if $L$ is not of constant, polynomial or exponential type, then $L$ is of {\em subexponential type}.
Determining the type of transitive groups was the central topic of~\cite{frestrictive}.

While one can show that there exist permutation groups of constant type (that is, graph-restrictive groups),  of polynomial type  \cite{Verret2}, and of exponential type,  the existence of permutation groups of subexponential type is still an open problem \cite[Question~4]{frestrictive}. The smallest undecided cases are three permutation groups of degree six, which can be characterised as follows.

Up to permutation equivalence, there are $16$ transitive groups of degree $6$ (see~\cite{conway}). Of these, exactly three admit an invariant partition with three parts of size two such that the kernel of the action on the parts has order $4$. Using the terminology from~\cite{conway}, these are:
\begin{itemize}
\item $A_4(6)$, TransitiveGroup(6,4), $\Alt(4)$ acting on the cosets of $\langle (1~2)(3~4)\rangle$, 
\item $S_4(6d)$, TransitiveGroup(6,7), $\Sym(4)$ acting on the cosets of $\langle (1~2),(3~4)\rangle$, 
\item $S_4(6c)$, TransitiveGroup(6,8), $\Sym(4)$ acting on the cosets of $\langle (1~2~3~4)\rangle$.
\end{itemize}
For the groups $A_4(6)$, $S_4(6d)$ and $S_4(6c)$, the authors of~\cite{frestrictive} were unable to determine whether these groups were of subexponential or exponential type.
The main result of this paper is to solve this problem by proving the following theorem.

\begin{theorem}
\label{theo:main2}
Each of the permutation groups $A_4(6)$, $S_4(6d)$ and $S_4(6c)$ is of exponential type.
\end{theorem}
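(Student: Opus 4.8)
The plan is to establish exponential type for each $L\in\{A_4(6),S_4(6d),S_4(6c)\}$ directly from the definition, by exhibiting an infinite family $\{(\Gamma_n,G_n)\}_{n\in\NN}$ of locally-$L$ pairs for which $|(G_n)_{uv}|$ is bounded below by an exponential function of $|\V(\Gamma_n)|$. I would exploit the structural feature isolated in the statement: each $L$ has a normal subgroup $N\cong\ZZ_2\times\ZZ_2$, namely the kernel of the action on the three blocks of size $2$, and the induced action $L/N$ on those blocks is $\ZZ_3$ for $A_4(6)$ and $\Sym(3)$ for $S_4(6d)$ and $S_4(6c)$. This makes it natural to build each $\Gamma_n$ over a cubic graph $\Delta_n$ whose local action realises $L/N$, so that the three blocks in a neighbourhood of $\Gamma_n$ correspond to the three edges of $\Delta_n$ at the relevant vertex, and to concentrate the exponential growth inside the kernel $N$.

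Concretely, I would take $\Gamma_n=\Delta_n[\overline{K_2}]$, the graph obtained from a cubic $2$-arc-transitive graph $\Delta_n$ by replacing each vertex with two nonadjacent vertices (a \emph{fibre}) and each edge with a copy of $K_{2,2}$. This is $6$-valent, it carries a natural $G$-invariant partition into fibres with quotient $\Delta_n$, and the neighbourhood of a vertex decomposes into the three blocks $\{(u,0),(u,1)\}$ indexed by the neighbours $u$ of the corresponding vertex of $\Delta_n$. Inside $\Aut(\Gamma_n)=\ZZ_2^{\V(\Delta_n)}\rtimes\Aut(\Delta_n)$ I would then let $G_n$ be generated by a lift of a suitable arc-transitive group of automorphisms of $\Delta_n$ together with a group $\mathcal F$ of (possibly base-twisted) fibre-flips, chosen so that at every vertex the flips induce exactly the even-weight subgroup of $\ZZ_2^3$, this being the copy of $N$ inside $L$. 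Transitivity of the lifted group on the arcs of $\Delta_n$ together with the fibre-flips gives arc-transitivity of $G_n$ on $\Gamma_n$. Arranging the lifted group to induce $\ZZ_3$ rather than $\Sym(3)$ on the three blocks yields $A_4(6)$, while inducing $\Sym(3)$ yields one of the two $S_4$'s; separating $S_4(6d)$ from $S_4(6c)$ would be handled by a twist in how the automorphisms of $\Delta_n$ are lifted, the two cases being told apart by a finite check of the resulting permutation action on the six neighbours.

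The growth comes from the fibre-flips. By construction, a flip lies in $G_n$ precisely when it satisfies an $\FF_2$-linear balance condition on every closed neighbourhood of $\Delta_n$, so that the admissible flips form the kernel of $A_n+I$ over $\FF_2$, where $A_n$ is the adjacency matrix of $\Delta_n$; this is the $1$-eigenspace appearing in the abstract. Writing $d_n$ for its dimension, the vertex-stabiliser $(G_n)_v$ contains a copy of this space cut down only by the finitely many local constraints at $v$, whence $|(G_n)_v|\ge |N|\cdot 2^{\,d_n-O(1)}$, and likewise for the arc-stabiliser. Since $|\V(\Gamma_n)|=2|\V(\Delta_n)|$, an exponential lower bound on $|(G_n)_{uv}|$ in terms of $|\V(\Gamma_n)|$ follows as soon as $d_n$ grows linearly in $|\V(\Delta_n)|$.

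Everything thus reduces to the auxiliary construction advertised in the abstract: an infinite family of cubic $2$-arc-transitive graphs $\{\Delta_n\}$ for which the $\FF_2$-dimension $d_n$ of the $1$-eigenspace of the adjacency matrix grows linearly with $|\V(\Delta_n)|$. I would attempt this through regular abelian covers of a fixed small cubic $2$-arc-transitive graph, such as $K_{3,3}$ or the Heawood graph, using the standard voltage-graph lifting criteria to guarantee that enough automorphisms of the base lift for $2$-arc-transitivity to persist, while choosing the covering data so that the $\FF_2$-rank of $A_n+I$ is deficient by a linear amount; over $\FF_2$ this rank is governed by the combinatorics of the cover rather than by the ordinary spectrum, which is what makes a linear deficiency plausible. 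I expect the main difficulty to lie precisely here: meeting, uniformly across the three groups, the three simultaneous demands that $\Delta_n$ be $2$-arc-transitive, that its $\FF_2$ $1$-eigenspace have linearly-growing dimension, and that the resulting $6$-valent graph have local action exactly the prescribed $L$ and not a proper overgroup.
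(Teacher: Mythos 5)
Your overall architecture coincides with the paper's: $\Gamma_n=\Lambda_n[\overline{K_2}]$ over a cubic $2$-arc-transitive base, with the exponential growth carried by fibre-flips forming the $\FF_2$ $1$-eigenspace $E_1=\ker(A+I)$, the local action $A_4(6)$ realised by $E_1\rtimes\tA$ with $\tA$ a $1$-arc-regular lift, and $S_4(6d)$ by $E_1\rtimes\tB$ with $\tB$ $2$-arc-regular. But there are two genuine gaps, both located exactly where the paper does its real work. First, you never construct the base family: ``choosing the covering data so that the $\FF_2$-rank of $A_n+I$ is deficient by a linear amount'' is a hope, not an argument --- no standard voltage-lifting criterion controls this rank. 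The paper's proof of this step (Theorem~\ref{theo:main}) takes $\ZZ_n^4$-covers $\Lambda_n$ of the M\"obius--Kantor graph with an explicit voltage assignment, exhibits a single explicit $1$-eigenvector $x_1$ with support of size $72$, and then bootstraps: if the supports of a linearly independent set of translates of $x_1$ do not yet cover $\V(\Lambda_n)$, vertex-transitivity supplies a further translate whose support contains an uncovered vertex, and such a translate is automatically independent of the previous ones; this yields at least $|\V(\Lambda_n)|/72$ independent eigenvectors. Nothing in your sketch replaces this explicit certificate, and you rightly flag this as the main difficulty --- it is, and it is unresolved in your proposal.

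Second, the $S_4(6c)$ case. Your plan to separate $S_4(6d)$ from $S_4(6c)$ ``by a finite check of the resulting permutation action'' cannot succeed for any untwisted group: if $x\in E_1$ with $x(v)=0$, the eigenvector equation at $v$ forces $\sum_{u\sim v}x(u)=0$, so the flips visible at $v$ always lie in the even-weight group of order $4$, and the local action of $E_1\rtimes\tB$ is $S_4(6d)$, whose point-stabilisers are Klein groups; $S_4(6c)$ requires a point-stabiliser $\ZZ_4$, hence an element acting as a $4$-cycle across two blocks, whose flip component is locally odd and therefore lies outside $E_1$. You do anticipate ``base-twisted'' flips, but the existence of a usable twist is a genuine lemma, not bookkeeping: one needs $\sigma\in\FF_2^{\V(\Lambda)}$ with $\sigma^b\sigma\in E_1$ for all $b\in\tB$ (so that $\langle E_1,\tA,\tau\sigma\rangle$ contains $E_1\rtimes\tA$ as a normal subgroup of index at most $2$ and the local action does not blow up) together with the local values $\sigma(v)=0$, $\sigma(w)=1$ and $\sigma=0$ on the remaining neighbours of $v$, for some arc $(v,w)$. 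The paper manufactures such a $\sigma$ by pulling back the indicator vector of a single vertex of $\K_4$ along the tower of covering projections $\Lambda_n\to\MK\to\Q_3\to\K_4$, which is one reason the base graph is the M\"obius--Kantor graph in the first place. Your suggested bases $K_{3,3}$ and the Heawood graph admit no covering projection onto $\K_4$ at all (their orders $6$ and $14$ are not divisible by $4$), so even granting the eigenspace step, your choice of base would obstruct the only available route to the twist.
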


In particular, the question of existence of permutation groups of subexponential type is still open. Moreover, since the type of all transitive permutation groups of degree at most $7$ was determined in \cite{frestrictive} with the exception of the groups $A_4(6)$, $S_4(6d)$ and $S_4(6c)$, the next unresolved case occurs in degree $8$, where there are four groups of unknown type, with IDs (8,24), (8,32), (8,39) and (8,40) in the database of transitive groups in {\sc Magma}.

We prove Theorem~\ref{theo:main2} by explicitly constructing a family of pairs  $(\Gamma_n,G_n)$ with the appropriate local action and $|(G_n)_{uv}|$ exponential in $|\V(\Gamma_n)|$. In our construction, the graph $\Gamma_n$ is defined as the lexicographic product of a $3$-valent arc-transitive graph $\Lambda_n$ with an edgeless graph on two vertices. The graphs $\Lambda_n$ are chosen so that for $L\in\{A_4(6), S_4(6d), S_4(6c)\}$ one can find a subgroup $G_n\le \Aut(\Gamma_n)$ such that $(\Gamma_n,G_n)$ is locally-$L$ and moreover, such that $|(G_n)_{uv}|$ grows exponentially with $|\V(\Gamma_n)|$.

The key property of the graphs $\Lambda_n$ that allows us to do this is, perhaps surprisingly, that
they have large $1$-eigenspace over $\FF_2$. (Recall that the eigenspace of a graph is the eigenspace of its adjacency matrix.) To be more precise, we require that the dimension of the $1$-eigenspace of the graphs $\Lambda_n$ grows linearly with $|\V(\Lambda_n)|$.
We thus obtain the following result along the way.

\begin{theorem}\label{theo:main}
There exists an infinite family of connected $3$-valent $2$-arc-transitive graphs $\{\Lambda_n\}_{n\in \NN}$ such that the $1$-eigenspace of $\Lambda_n$ over $\FF_2$ has dimension at least $|\V(\Lambda_n)|/72$.
\end{theorem}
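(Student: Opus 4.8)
The plan is to first reinterpret the invariant and then realise the family through a tower of regular covers. Working over $\FF_2$, a vector $x\in\FF_2^{\V(\Lambda)}$ lies in the $1$-eigenspace of the adjacency matrix $A$ precisely when $(A+I)x=0$; writing $S=\{v: x_v=1\}$ for the support, this says that every closed neighbourhood $\{v\}\cup\Lambda(v)$ meets $S$ in an even number of vertices. Thus the $1$-eigenspace over $\FF_2$ is the kernel of the closed-neighbourhood matrix $A+I$, and since $\Lambda$ is cubic the all-ones vector always lies in it. The goal is therefore to produce an infinite family of connected cubic $2$-arc-transitive graphs for which $\dim_{\FF_2}\ker(A+I)$ grows linearly in $|\V(\Lambda)|$. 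I would build this family as an iterated sequence of regular $\ZZ_2$-covers $\Lambda_0,\Lambda_1,\ldots$ of a fixed base pair $(\Lambda_0,G_0)$, where $G_0$ is a $2$-arc-transitive group of automorphisms of a small cubic graph $\Lambda_0$. Using voltage assignments valued in $\ZZ_2$ that are invariant under a lift of $G_0$, the covering projection doubles the number of vertices at each step while $2$-arc-transitivity lifts from $\Lambda_n$ to $\Lambda_{n+1}$ (by the standard lifting theory for elementary abelian covers).

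The heart of the argument is to show that, with a suitable choice of voltage, each covering step also doubles the dimension of the $\FF_2$-kernel, so that the ratio $\dim_{\FF_2}\ker(A+I)/|\V(\Lambda)|$ is preserved along the tower. Write the cover's adjacency matrix in block form coming from the two sheets, so that $A=A_0+A_1$ splits the edges of the base into those carrying voltage $0$ and those carrying voltage $1$. A direct computation shows that $(x_0,x_1)$ lies in the $1$-eigenspace of the cover if and only if $y:=x_0+x_1\in\ker(A+I)$ and $(A+I)x_0=A_1y$. Hence the cover's kernel is parametrised by pairs $(y,x_0)$ with $y$ in the base kernel and $x_0$ solving an affine system, and one finds
\begin{equation*}
\dim_{\FF_2}\ker_{\mathrm{cover}}(A+I)=2\dim_{\FF_2}\ker(A+I)-\dim_{\FF_2}\bigl(A_1\ker(A+I)\bmod \operatorname{im}(A+I)\bigr).
\end{equation*}
Since $A+I$ is symmetric, $\operatorname{im}(A+I)=\ker(A+I)^{\perp}$, so the correction term vanishes exactly when the bilinear form $(u,v)\mapsto u^{\top}A_1v$ is identically zero on the base kernel. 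Thus, if the voltage is chosen so that its set of odd edges is totally isotropic with respect to the base kernel, the step doubles the kernel as well as the vertex set, and the ratio is maintained.

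The main obstacle is to satisfy both requirements at once and to keep doing so indefinitely: the voltage must be $G_0$-invariant (to lift $2$-arc-transitivity and to keep the covers connected) and simultaneously isotropic on the kernel (to force the doubling). I would first secure the base case by choosing an explicit small cubic $2$-arc-transitive graph $\Lambda_0$ whose $\FF_2$-kernel dimension is a known fraction of $|\V(\Lambda_0)|$, large enough to yield the constant $1/72$. Then, because $G_0$ acts on $\ker(A+I)$ and on the space of $\ZZ_2$-voltages, I would produce an invariant voltage with the isotropy property by averaging and irreducibility considerations over the group, checking that the induced form admits a $G_0$-invariant totally isotropic configuration of the required kind. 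The delicate point is that this structure must persist after each cover: the lifted group acts on the enlarged kernel with a controlled module structure, and one must verify that an invariant isotropic voltage again exists at every stage, so that the induction never stalls. Establishing this stability — rather than the single-step doubling, which is the routine linear algebra above — is where the real work lies.
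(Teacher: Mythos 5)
Your one-step linear algebra is correct: for a $\ZZ_2$-cover with voltage splitting the adjacency matrix as $A_0+A_1$, the kernel of $A+I$ on the cover is parametrised exactly as you say, and the doubling criterion via total isotropy of $(u,v)\mapsto u^{\top}A_1v$ on $\ker(A+I)$ is right. But this is not where the theorem lives, and you concede as much in your final sentence. The engine of your argument --- the existence, \emph{at every level of an infinite tower}, of a $\ZZ_2$-voltage that is simultaneously invariant under the lifted $2$-arc-transitive group (needed for connectedness and for $2$-arc-transitivity to lift) and isotropic on the current kernel --- is asserted, not proved. The specific tool you invoke, ``averaging'' over $G_0$, is unavailable here: a $2$-arc-transitive group of a cubic graph has even order, and one cannot average over a group of even order in characteristic $2$. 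Invariant $\ZZ_2$-covers correspond to $G$-invariant codimension-$1$ submodules of the $\FF_2$-cycle space, and there is no general reason such a submodule exists at each stage, let alone one meeting your isotropy condition; you also never exhibit a base graph $\Lambda_0$ with the required kernel ratio. (A small softening is available and worth noting: exact doubling is not needed, since a defect of $d_i$ at step $i$ costs $d_i/|\V(\Lambda_{i+1})|$ in the ratio, and with $|\V(\Lambda_i)|$ growing geometrically a uniformly bounded defect still leaves a positive limiting ratio. But this does not touch the existence problem, which is the actual gap.)

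For comparison, the paper sidesteps induction entirely. It fixes one explicit family: $\ZZ_n^4$-covers $\Lambda_n$ of the M\"obius--Kantor graph given by a concrete voltage assignment, for which the $2$-arc-regular group lifts by standard covering theory. It then writes down a single explicit $1$-eigenvector $x_1$ over $\FF_2$ with support of size $72$ (verified by a finite check of neighbourhood parities), and uses vertex-transitivity: as long as the supports of the eigenvectors found so far fail to cover $\V(\Lambda_n)$, translate $x_1$ by an automorphism so that its support contains an uncovered vertex; the new vector is automatically independent of the previous ones because it is nonzero at a vertex where they all vanish. This yields at least $|\V(\Lambda_n)|/72$ independent eigenvectors with no module-theoretic input at all --- the only nontrivial content is one finite verification on one graph, propagated by symmetry, whereas your route requires solving a genuinely open-ended representation-theoretic problem at every stage.
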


We would like to point out that the existence of the family $\{\Lambda_n\}_{n\in \NN}$ was strongly suggested by the census of cubic arc-transitive graphs \cite{ConDob02}.


We should also mention that the approach that we use in the proof of Theorem~\ref{theo:main2}
is similar to the one used in \cite{frestrictive}, where the $0$-eigenspaces of cubic $2$-arc-transitive graphs were used instead of the $1$-eigenspaces. It was shown in \cite{nullity} that the $0$-eigenspace in a $2$-arc-transitive cubic graph can have arbitrary large dimension, it is still an open question whether there exists an infinite family of such graphs such that the dimension is linear in the order of the graph. In Section~\ref{MKGraph} we define the graphs $\Lambda_n$ and prove Theorem~\ref{theo:main}.
Section~\ref{sec:main} is devoted to the proof of Theorem~\ref{theo:main2}.

\section{The M\"obius--Kantor graph and some of its arc-transitive covers}
\label{MKGraph}

In this section we construct a family of cubic $2$-arc-transitive graphs $\Lambda_n$ as covers of the M\"obius-Kantor graph and show that they have a large $1$-eigenspace over $\FF_2$.

Consider the group 
$$R=\langle a,b,c,z\mid 1=a^2=b^2=c^2=z^2=[a,z]=[b,z]=[c,z],[a,b]=[b,c]=[a,c]=z\rangle.$$
It not hard to see that $R$ is a group of order $16$. The Cayley graph $\MK=\Cay(R,\{a,b,c\})$ is a connected $3$-valent vertex-transitive graph called the \emph{M\"obius--Kantor graph}. (See Figure~\ref{MKpic}, where $a$-edges are dotted, $b$-edges are  black, and $c$-edges are  gray. Arrows and edge labels can be ignored for now.) It is obvious from the given presentation of $R$ that any permutation of $\{a,b,c\}$ induces an automorphism of $R$. It follows that $\MK$ admits a group of automorphisms $B$ isomorphic to $R\rtimes \Sym(3)$. (In fact, $B$ is the full automorphism group of $\MK$, but we will not use this fact.) Note that $B$ is $2$-arc-regular and contains an arc-regular subgroup $A$ of the form $R\rtimes \ZZ_3$.

\tikzstyle{vertex}=[circle, draw, fill=white!50,inner sep=0pt, minimum width=22pt]
\tikzstyle{lab}=[node distance=1.3cm,bend angle=45,auto]
\tikzstyle{edgeA} = [draw,dotted, line width=3pt,-,black]
\tikzstyle{edgeB} = [draw,line width=3pt,-,gray]
\tikzstyle{edgeC} = [draw,line width=3pt,-,black]

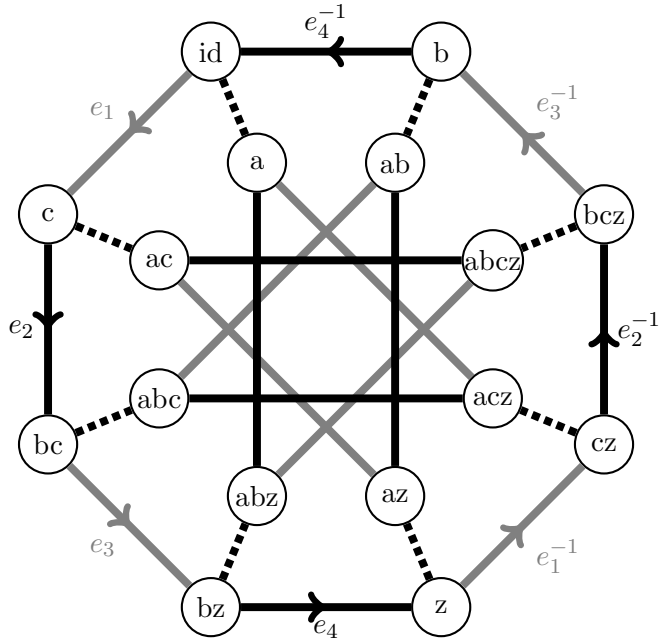
\begin{figure}[hh]
\centering
\begin{tikzpicture}[thick,scale=0.8,decoration={markings, mark=at position 0.5 with {\arrow{>}}}]
\node[vertex]  (abcz) at (22.5:3){abcz};
\node[vertex]  (ab) at (67.5:3){ab};
\node[vertex]  (a) at (112.5:3){a};
\node[vertex]  (ac) at (157.5:3){ac};
\node[vertex]  (abc) at (202.5:3){abc};
\node[vertex]  (abz) at (247.5:3){abz};
\node[vertex]  (az) at (292.5:3){az};
\node[vertex]  (acz) at (337.5:3){acz};
\node[vertex]  (bcz) at (22.5:5){bcz};
\node[vertex]  (b) at (67.5:5){b};
\node[vertex]  (1) at (112.5:5){id};
\node[vertex]  (c) at (157.5:5){c};
\node[vertex]  (bc) at (202.5:5){bc};
\node[vertex]  (bz) at (247.5:5){bz};
\node[vertex]  (z) at (292.5:5){z};
\node[vertex]  (cz) at (337.5:5){cz};

\path (1) edge[edgeA] (a);
\path (b) edge[edgeA]  (ab) ;
\path (c) edge[edgeA] (ac);
\path (z) edge[edgeA] (az);
\path (bc) edge[edgeA] (abc);
\path (bz) edge[edgeA] (abz);
\path (cz) edge[edgeA] (acz);
\path (bcz) edge[edgeA](abcz); 
\path (a) edge[edgeB] (acz);
\path (abz) edge[edgeB]  (abcz) ;
\path (az) edge[edgeB] (ac);
\path (abc) edge[edgeB] (ab);
\path (az) edge[edgeC] (ab);
\path (a) edge[edgeC]  (abz) ;
\path (ac) edge[edgeC] (abcz);
\path (abc) edge[edgeC] (acz);

\path (1) edge[edgeB,postaction={decorate}] node[lab] [swap] {$e_1$} (c);    
\path (c) edge[edgeC,postaction={decorate}] node[lab]  [swap]{$e_2$} (bc); 
\path (bc) edge[edgeB,postaction={decorate}] node[lab]  [swap]{$e_3$} (bz); 
\path (bz) edge[edgeC,postaction={decorate}] node[lab] [swap] {$e_4$} (z);   
\path (z) edge[edgeB,postaction={decorate}] node[lab]  [swap]{$e_1^{-1}$} (cz); 
\path (cz) edge[edgeC,postaction={decorate}] node[lab]  [swap]{$e_2^{-1}$} (bcz);  
\path (bcz) edge[edgeB,postaction={decorate}] node[lab]  [swap]{$e_3^{-1}$} (b); 
\path (b) edge[edgeC,postaction={decorate}] node[lab]  [swap]{$e_4^{-1}$} (1);    
\end{tikzpicture}
\caption{M\"obius--Kantor graph}\label{MKpic}
\end{figure}

We will now define the graphs $\Lambda_n$ as covering graphs over $\MK$. But first we need to introduce a few notions from the theory of covering graphs. Here we only give a very brief account of the subject and refer the reader to \cite{elemab} for more comprehensive treatment.

Let $\Gamma$ be a connected graph and $N$ a group. Assign to each arc $(u,v)$ of $\Gamma$ a {\em voltage} $\zeta(u,v) \in N$ such that $\zeta(v,u) =  (\zeta(u,v))^{-1}$. Let $\Cov(\Gamma;\zeta)$ be the {\em derived covering graph} with vertex set $\V(\Gamma) \times N$ and adjacency relation defined by $(u,a) \sim (v,a\, \zeta(u,v))$, where $u \sim v$ in $\Gamma$. For a fixed vertex $v\in \V(\Gamma)$, the set $\{v\} \times N$ is called the {\em fibre} above $v$. If $\zeta$ maps every arc of some spanning tree of $\Gamma$ to the trivial element of $N$, then we say that $\zeta$ is {\em normalised}; in this case $\Cov(\Gamma;\zeta)$ is connected if and only if the image of $\zeta$ generates $N$. If $C=v_0v_1\ldots v_nv_0$ is a closed walk of $\Gamma$, then we let $\zeta(C)= \zeta(v_nv_0)\zeta(v_{n-1}v_n) \cdots \zeta(v_0v_1)$.

An automorphism of $\tilde{g}\in\Aut(\Cov(\Gamma;\zeta))$ which preserves the partition of the vertices into fibres clearly induces an automorphism $g$ of $\Gamma$; in this case, we say that $\tilde{g}$ {\em projects}  to $g$ and  that $g$ {\em lifts} to $\tilde{g}$. If every automorphism of a group $G\le \Aut(\Gamma)$ lifts, then the set $\tilde{G}$ of lifts of all the elements of $G$ forms a group, called the {\em lift of $G$}. It is well known that in this case the group $G$ is vertex-transitive (or edge-transitive or $s$-arc-transitive) if and only if $\tilde{G}$ is vertex-transitive (or edge-transitive or $s$-arc-transitive, respectively).

We are now ready to define the graphs $\Lambda_n$.
Let $n$ be a positive integer and let $N=\ZZ_n^4=\langle e_1,e_2,e_3,e_4\rangle$. Let $\zeta_n\colon\ A(\MK) \to N$ be the voltage assignment given in Figure~\ref{MKpic} (where unlabelled arcs get trivial voltage). Let $\Lambda_n$ be the derived covering graph with respect to $\zeta_n$.

Note that since $\zeta_n$ is normalised and the image of $\zeta_n$ generates $N$, the graph $\Lambda_n$ is connected.  In the case when $n$ is prime, $\zeta_n$ was described in \cite{gp83} and corresponds to the subspace  ${\mathcal{L}}_R(i,-i) \le \Hom(\MK;\ZZ_n)$ generated by the vectors $\langle c_1, c_2, c_3, c_4\rangle$ described in~\cite[page 2168]{gp83}, or equivalently in the case where $n\equiv 1 \pmod 4$, by the vectors $b_1, b_2, b_3, b_4$~\cite[page 2165]{gp83}.

In the case of $n$ being prime, it was proved in \cite{gp83} that $B$ lifts along the covering projection $\Lambda_n \to \MK$, and the argument there generalises easily to an arbitrary positive integer $n$. This can also be proved using the following direct approach: In view of \cite[Proposition 5.1]{elemab}, an automorphism $g \in \Aut(\MK)$ lifts along the covering projection $\Lambda_n \to \MK$ if and only if there exists an automorphism $g^\# \in \Aut(N)$ such that $\zeta_n(C)^{g^\#} = \zeta_n(C^g)$ for every closed walk $C$, or equivalently, for every generator $C$ of a fixed generating set of the cycle space of $\MK$. Recall that a generating set of the cycle space can be constructed by choosing a spanning tree $T$ and then, for every edge $uv$ not in $T$, taking the cycle  whose only arc not in $T$ is $(u,v)$. In our case, we have chosen the spanning tree which consists of all the $a$-edges (the dotted edges) and all the edges of the inner $8$-cycle except the edge $\{ac, abcz\}$ (see Figure~\ref{MKpic}), and then found an appropriate automorphism $g^\#$ for every generator $g$ of $B$. For example, if $g$ is the automorphism mapping the vertices of $\MK$ according to the rule $v\mapsto v\cdot z$ (which in Figure~\ref{MKpic} corresponds to the reflection  through the central point of the figure), then the corresponding $g^\#$  maps every $x\in N$ to its inverse $x^{-1}$. This calculation was done with the help of {\sc Magma} \cite{mag}.

Let $\tB \le \Aut(\Lambda_n)$ denote the lift of $B$ and let $\tA$ be the lift of $A$. Note that $\tB$ is $2$-arc-regular on $\Lambda_n$, and $\tA$ is $1$-arc-regular. In particular, $|\tB|=2|\tA|=6\cdot 16n^4$.

\section{Proof of Theorem~\ref{theo:main}}\label{sec:main}

Let $n\geq 3$ be a positive integer and let $\Lambda=\Lambda_n$. Further, let $\FF_2^{\V(\Lambda)}$ denote the $\FF_2$-vector space of all functions from $\V(\Lambda)$ to $\FF_2$. An element $x\in \FF_2^{\V(\Lambda)}$ is a \emph{$1$-eigenvector} for $\Lambda$ if and only if $\sum_{u\in\Gamma(v)} x(u)=x(v)$ for every $v\in \V(\Gamma)$.

Our first goal is to exhibit a $1$-eigenvector for $\Lambda_n$ whose support
(the number of vertices mapped to $1$) is bounded (that is, does not grow with $n$). Finding such an eigenvector  proved to be a surprisingly difficult task, which we could not have performed without the considerable help of a computer. Let $S_1\subseteq \V(\Lambda)=\V(\MK)\times\ZZ_n^4$ be defined as follows:
\begin{equation*}
\begin{split}
S_1=&\{
(\hbox{id}, 1, 0, 1, 1 ),
(\hbox{id}, 1, 1, 1, 1 ),
(\hbox{id}, 1, 1, 2, 1 ),
(\hbox{id}, 1, 2, 2, 1 ),
(a, 0, 0, 1, 2 ),
(a, 0, 1, 2, 2 ),
(a, 1, 1, 1, 1 ),\\
&(a, 1, 2, 2, 1 ),
(ab, 0, 0, 1, 2 ),
(ab, 1, 1, 1, 1 ),
(ab, 1, 1, 1, 2 ),
(ab, 1, 1, 2, 2 ),
(ab, 1, 2, 2, 2 ),
(ab, 2, 1, 1, 1 ),\\
&(abc, 0, 1, 1, 2 ),
(abc, 1, 1, 1, 1 ),
(abc, 1, 2, 2, 2 ),
(abc, 2, 2, 2, 1 ),
(abcz, 0, 0, 1, 2 ),
(abcz, 1, 0, 1, 1 ),\\
&(abcz, 1, 1, 2, 2 ),
(abcz, 2, 1, 2, 1 ),
(abz, 0, 1, 2, 2 ),
(abz, 1, 0, 1, 1 ),
(abz, 1, 1, 1, 1 ),
(abz, 1, 1, 2, 1 ),\\
&(abz, 1, 1, 2, 2 ),
(abz, 2, 2, 2, 1 ),
(ac, 0, 0, 1, 2 ),
(ac, 1, 1, 1, 1 ),
(ac, 1, 1, 2, 2 ),
(ac, 2, 2, 2, 1 ),\\
&(acz, 0, 0, 1, 2 ),
(acz, 1, 1, 1, 1 ),
(acz, 1, 1, 2, 2 ),
(acz, 2, 2, 2, 1 ),
(az, 1, 0, 1, 2 ),
(az, 1, 1, 2, 2 ),\\
&(az, 2, 1, 1, 1 ),
(az, 2, 2, 2, 1 ),
(b, 0, 0, 1, 2 ),
(b, 0, 1, 1, 2 ),
(b, 1, 0, 1, 2 ),
(b, 1, 1, 1, 2 ),
(bc, 0, 1, 1, 2 ),\\
&(bc, 1, 1, 1, 1 ),
(bc, 1, 1, 1, 2 ),
(bc, 2, 1, 1, 1 ),
(bcz, 0, 1, 2, 2 ),
(bcz, 1, 1, 2, 1 ),
(bcz, 1, 1, 2, 2 ),\\
&(bcz, 2, 1, 2, 1 ),
(bz, 1, 1, 2, 1 ),
(bz, 1, 2, 2, 1 ),
(bz, 2, 1, 2, 1 ),
(bz, 2, 2, 2, 1 ),
(c, 1, 0, 1, 1 ),
(c, 1, 0, 1, 2 ),\\
&(c, 1, 1, 1, 1 ),
(c, 1, 1, 2, 2 ),
(c, 2, 1, 1, 1 ),
(c, 2, 1, 2, 1 ),
(cz, 0, 1, 1, 2 ),
(cz, 0, 1, 2, 2 ),
(cz, 1, 1, 1, 1 ),\\
&(cz, 1, 1, 2, 2 ),
(cz, 1, 2, 2, 1 ),
(cz, 1, 2, 2, 2 ),
(z, 1, 0, 1, 2 ),
(z, 1, 1, 1, 2 ),
(z, 1, 1, 2, 2 ),
(z, 1, 2, 2, 2 )
\}.
\end{split}
\end{equation*}

As alluded to earlier, $S_1$ was found with the help of a computer. We do not have a ``natural'' description of this set or of another set with the desired properties. See Figure~\ref{fig:72} for a drawing of the induced subgraph of $\Lambda$ on $S_1\cup \Lambda(S_1)$. Vertices are colored black if they are in $S_1$, white otherwise. Dashed edges belong to fibres over edges of $\MK$ with trivial voltage.  Gray and black solid edges are in fibres over $b$-edges and $c$-edges with non-trivial voltages, respectively.

\begin{figure}
\begin{center}
\includegraphics[width=0.95\textwidth]{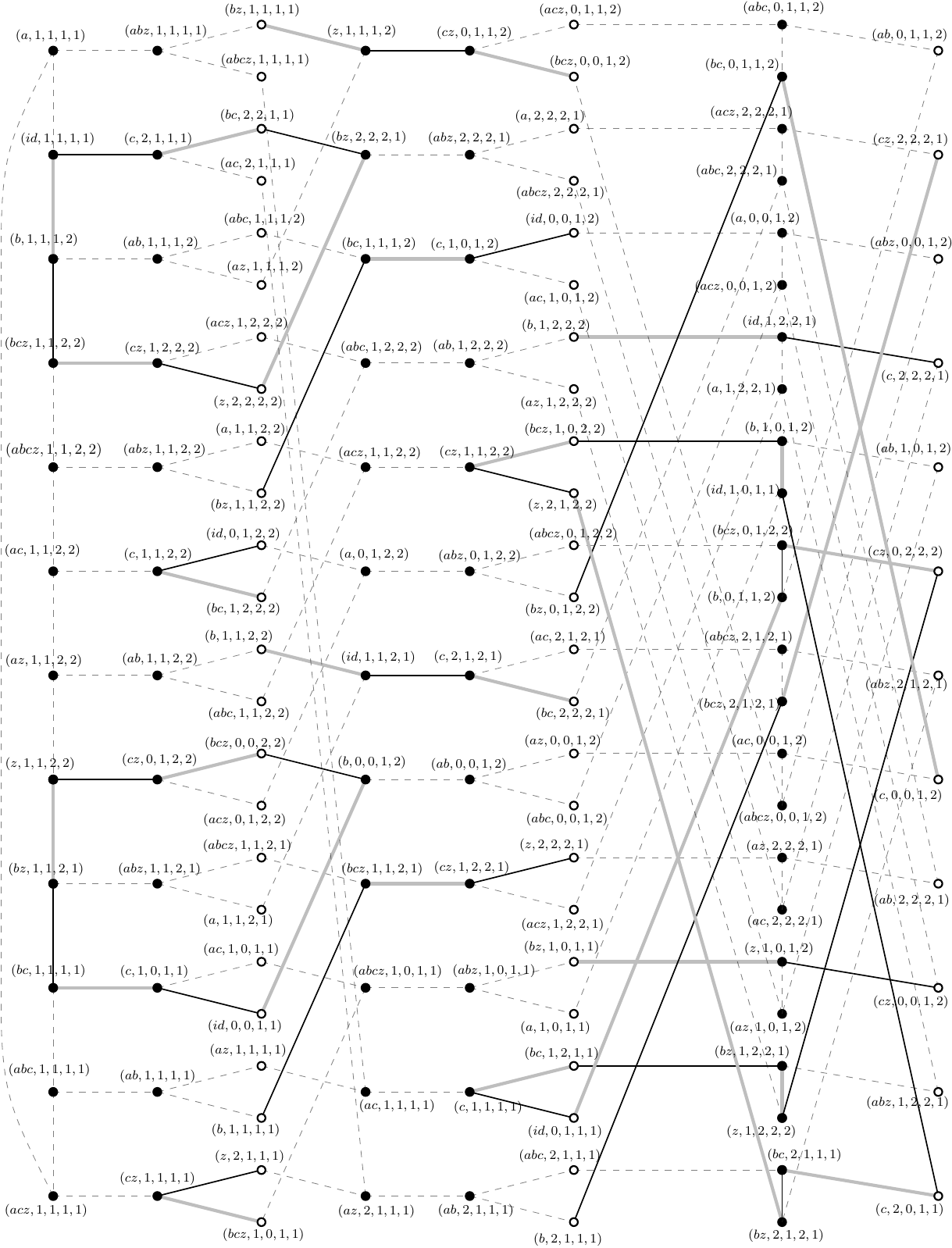} 
\caption{$\Lambda[S_1\cup \Lambda(S_1)]$. }\label{fig:72}
\end{center}
\end{figure}

One can check (using Figure~\ref{fig:72} or by computer) that every element of $S_1$ has an odd number of neighbours in $S_1$. (In Figure~\ref{fig:72}, this is equivalent to every black vertex having an odd number of black neighbours.) Similarly, one can check that every vertex not in $S_1$ has an even number of neighbours in $S_1$. (Vertices that do not appear in Figure~\ref{fig:72} have no black neighbours. This is thus equivalent to the fact that, in  Figure~\ref{fig:72}, every white vertex has an even number of black neighbours.)  In Section~\ref{Appendix}, we provide {\sc magma} code that checks this claim.

Let $x_1\in \FF_2^{\V(\Lambda)}$ satisfying $x_1(v)=1\Leftrightarrow v\in S_1$. It follows from the paragraph above that $x_1$ is a $1$-eigenvector for $\Lambda$. Let $\{x_1,\ldots,x_k\}$ be a linearly independent set of $1$-eigenvectors for $\Lambda$, and let $S_i$ be the support of $x_i$. Suppose that there exists  $v\in V(\Lambda)\setminus (\bigcup_{i=1}^k S_i)$. Since $\Lambda$ is vertex-transitive, there exists $g_{k+1}\in \Aut(\Lambda)$ such that $v\in S_1^{g_{k+1}}$. Let $x_{k+1}=x_1^{g_{k+1}}$. Since ${g_{k+1}}\in \Aut(\Lambda)$, $x_{k+1}$ is an $1$-eigenvector for $\Lambda$.    Moreover, $x_{k+1}$ has support $S_1^{g_{k+1}}$ which contains $v$. Since $\{x_1,\ldots,x_k\}$ is linearly independent, so is $\{x_1,\ldots,x_k,x_{k+1}\}$.

Starting with $\{x_1\}$ and repeatedly applying the procedure described in the previous paragraph, we obtain a linearly independent set of $1$-eigenvectors $\{x_1,\ldots,x_t\}$ such that $\V(\Lambda)=\bigcup_{i=1}^t \supp(x_i)$ and, for each $i=1,\ldots,t$,  $x_i=x_1^{g_i}$ for some $g_i\in\Aut(\Lambda)$. It follows that $|\supp(x_i)|=72$, $|\V(\Lambda)|\leq \sum_{i=1}^t |\supp(x_i)|=72t$ and thus $\{x_1,\ldots,x_t\}$ is a linearly independent set of $1$-eigenvectors for $\Lambda$ of size at least $|\V(\Lambda)|/72$. This concludes the proof of Theorem~\ref{theo:main}.


\section{Proof of Theorem~\ref{theo:main2}}
Let $n\geq 1$, let $\Lambda=\Lambda_n$ and let $\Gamma$ be the lexicographic product $\Lambda[\overline{K_2}]$. In other words, $\V(\Gamma)=\V(\Lambda)\times \FF_2$ with $(u,x)$ adjacent to $(v,y)$ in $\Gamma$ if and only if $u$ is adjacent to $v$ in $\Lambda$. Since $\Lambda$ is a connected $3$-valent graph, $\Gamma$ is a connected $6$-valent graph.

Note that $\Aut(\Lambda)$ has a natural action as a group of automorphisms of $\Gamma$ (by acting on the first coordinate of vertices of $\Gamma$ while fixing the second coordinate). Similarly,  $\FF_2^{\V(\Lambda)}$ also has a natural action as a group of automorphisms of $\Gamma$ (given by $(v,a)^x=(v,a+x(v))$, for $x\in \FF_2^{\V(\Lambda)}$ and $(v,a)\in \V(\Lambda)\times \FF_2$). 

Let $E_1$ be the $1$-eigenspace for $\Lambda$ over $\FF_2$. Note that $E_1\leq\FF_2^{\V(\Lambda)}$ and thus $E_1\leq\Aut(\Gamma)$.  It is not difficult to see that in fact $E_1\unlhd \Aut(\Gamma)$. Let $m=|\V(\Gamma)|=2|\V(\Lambda)|=32n^4$. As we saw in Section~\ref{sec:main}, we have $|E_1|\geq 2^{|\V(\Lambda)|/72}=2^{m/144}$.  Recall from Section~\ref{MKGraph} that $\Lambda$ admits a $2$-arc-regular group of automorphism $\tB$, as well as a  $1$-arc-regular subgroup $\tA\leq \tB$. 

First, let $G_1=\langle E_1,\tA\rangle=E_1\rtimes \tA\leq\Aut(\Gamma)$. Recall that $|\tA|=3\cdot 16n^4=3m/2$ and thus $|G_1|=|E_1||\tA|\geq 3m\cdot 2^{m/144-1}$. In particular, a vertex-stabiliser in $G_1$ has order at least $3\cdot 2^{m/144-1}$. Since the dimension of $E_1$ is more than one and $\Lambda$ is connected, $E_1$ must contain an element $x$ such that $x(v)=0$ and $x(u)=1$ for some $(u,v)\in\A(\Lambda)$. It easily follows that $\Gamma$ is $G_1$-arc-transitive. The stabiliser of the vertex $(v,0)$ in $G_1$ is $(E_1)_v\rtimes \tA_v$, where $(E_1)_v$ is the subspace (of codimension $1$) of $E_1$ consisting of vectors $x$ with $x(v)=0$. The local action at $(v,0)$ is $L:=((E_1)_v\rtimes \tA_v)^{\Gamma(v,0)}=(E_1)_v^{\Gamma(v,0)}\rtimes \tA_v^{\Gamma(v,0)}$. Clearly, $L$ has blocks of size $2$, and the kernel of the action on these blocks is $(E_1)_v^{\Gamma(v,0)}$, while the induced action on the blocks is isomorphic to $\tA_v^{\Gamma(v,0)}\cong \ZZ_3$. Now, we saw earlier that $(E_1)_v^{\Gamma(v,0)}$ is non-trivial. From the definition of the $1$-eigenspace, it follows that $(E_1)_v^{\Gamma(v,0)}$ must have order $4$. As remarked in the introduction, if a transitive group of degree $6$ admits an invariant partition with three parts of size $2$ such that the kernel of the action on the parts has order $4$, then it must be one of three listed in the introduction. By order considerations, we then see that  $L$ must be $A_4(6)$.

Next, let $G_2=\langle E_1,\tB\rangle=E_1\rtimes \tB\leq\Aut(\Gamma)$. We can repeat the argument in the last paragraph, with the significant differences being that a vertex-stabiliser in $G_2$ has order at least $3\cdot 2^{m/144}$, $L=(E_1)_v^{\Gamma(v,0)}\rtimes \tB_v^{\Gamma(v,0)}$ and that the action on the blocks of size $2$ is isomorphic to $\tB_v^{\Gamma(v,0)}\cong \Sym(3)$ and thus $|L|=24$. Let $\Lambda(v)=\{v_1,v_2,v_3\}$. Since  $\tB$ acts transitively on the $2$-arcs of $\Lambda$, there exists $\alpha\in \tB_{(v,v_3)}$ interchanging $v_1$ and $v_2$. The element of $L$ induced by $\alpha$ is the permutation $((v_1,0)\,(v_2,0))((v_1,1),(v_2,1))$. Further, since $(E_1)_v^{\Gamma(v,0)}$ is non-trivial, there exists $x\in (E_1)_v$ such that $x(v_1)=x(v_2)=1$ and $x(v_3)=0$. The element of $L$ induced by $x$ is the permutation $((v_1,0)\,(v_1,1))((v_2,0),(v_2,1))$. It is now easy to see that $L_{(v_3,0)}$ is isomorphic to the Klein group. Again, using the remark in the introduction, we see that $L=S_4(6d)$. It thus remains to deal with $S_4(6c)$. Before we can do this, we need the following.

\begin{lemma}
There exists  $\sigma\in \FF_2^{\V(\Lambda)}$ with the following properties:
\begin{itemize}
\item For all $b\in \tB$, we have $\sigma^b\sigma\in E_1$.
\item There exists an arc $(v,w)$ of $\Lambda$, such that $\sigma(v)=0$, $\sigma(w)=1$, and $\sigma(u)=0$ for all neighbours $u$ of $v$ other than $w$.
\end{itemize}
\end{lemma}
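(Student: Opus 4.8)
The plan is to translate both conditions into statements about the $\FF_2$-linear operator $A+I$ acting on $\FF_2^{\V(\Lambda)}$, where $A$ is the adjacency matrix of $\Lambda$, so that $E_1=\ker(A+I)$. First I would handle the first bullet. Since every $b\in\tB$ commutes with $A$ (being a graph automorphism) and the all-ones vector $\mathbf{1}$ is $\tB$-invariant, the condition $\sigma^b+\sigma\in E_1$ for all $b$ is equivalent to $(A+I)\sigma$ being $\tB$-invariant. As $\tB$ is vertex-transitive, the only $\tB$-invariant vectors are $0$ and $\mathbf{1}$, so the first bullet holds if and only if $(A+I)\sigma\in\{0,\mathbf{1}\}$. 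The second bullet forces $((A+I)\sigma)(v)=\sigma(v)+\sum_{u\sim v}\sigma(u)=0+1=1$, which rules out the value $0$. Hence the entire lemma reduces to producing a $\sigma$ with $(A+I)\sigma=\mathbf{1}$ that additionally exhibits the prescribed local pattern at one arc.

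The crux is therefore solvability of $(A+I)\sigma=\mathbf{1}$ over $\FF_2$, i.e.\ $\mathbf{1}\in\mathrm{im}(A+I)$. Because $A+I$ is symmetric, $\mathrm{im}(A+I)=(\ker(A+I))^{\perp}=E_1^{\perp}$, so it suffices to show every $z\in E_1$ has even weight. This follows from the $\FF_2$-identity $z^{\top}(A+I)z=\sum_i (A+I)_{ii}\,z_i=\langle \mathbf{1},z\rangle$, where the off-diagonal contributions cancel in pairs and $(A+I)_{ii}=1$; the left-hand side vanishes for $z\in\ker(A+I)$, giving $\langle\mathbf{1},z\rangle=0$. (This is exactly the classical fact that the diagonal of a symmetric $\FF_2$-matrix lies in its column space, equivalently Sutner's all-ones theorem, and it holds for every graph.) I would then fix any solution $\sigma_0$ of $(A+I)\sigma_0=\mathbf{1}$; it already satisfies the first bullet.

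It remains to correct $\sigma_0$ by an element of $E_1$ so as to install the local pattern at a chosen arc $(v,w)$, with $u,u'$ the other two neighbours of $v$. Writing $\sigma=\sigma_0+e$ with $e\in E_1$, the requirement $\sigma(v)=0,\ \sigma(w)=1,\ \sigma(u)=\sigma(u')=0$ pins down the values of $e$ on the closed neighbourhood $N[v]$, and a one-line check using $((A+I)\sigma_0)(v)=1$ shows these four forced values sum to $0$, so the target lies in the sum-zero subspace $W_0\le\FF_2^{N[v]}$. The only remaining point is that the evaluation map $E_1\to\FF_2^{N[v]}$ surjects onto $W_0$. Here I would invoke the two facts already established: the order-$4$ computation shows $\{x\in E_1:x(v)=0\}$ realizes every sum-zero pattern on the three neighbours (the $e(v)=0$ slice of $W_0$), while $\mathbf{1}\in E_1$ (valid because $\Lambda$ is cubic, so $A\mathbf{1}=\mathbf{1}$) supplies the pattern $(1;1,1,1)$ off that slice; together they span all of $W_0$. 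Adding the resulting $e$ to $\sigma_0$ leaves $(A+I)\sigma=\mathbf{1}$ intact (first bullet) and realizes the local pattern (second bullet).

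The main obstacle is the single reduction in the middle: recognizing that the whole statement collapses to solvability of $(A+I)\sigma=\mathbf{1}$, and that this is guaranteed for \emph{every} graph by the symmetric-matrix diagonal fact over $\FF_2$. Once this is in place, the rest is bookkeeping resting on the order-$4$ local action and the membership $\mathbf{1}\in E_1$ noted earlier; no delicate estimate or case analysis is needed.
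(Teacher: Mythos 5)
Your proposal is correct, but it proves the lemma by a genuinely different route than the paper. You linearise the problem: the first bullet holds as soon as $(A+I)\sigma$ is $\tB$-invariant, vertex-transitivity leaves only $0$ and $\mathbf{1}$ as candidates, and the local pattern at $v$ forces the value $1$; solvability of $(A+I)\sigma=\mathbf{1}$ then follows for \emph{every} graph from the fact that the diagonal of a symmetric matrix over $\FF_2$ lies in its column space (your identity $z^{\top}(A+I)z=\langle\mathbf{1},z\rangle$ together with $\mathrm{im}(A+I)=E_1^{\perp}$, i.e.\ Sutner's theorem); finally you install the local pattern by adding an element of $E_1$, using two facts already established in Section~4 — that $\mathbf{1}\in E_1$ since $\Lambda$ is cubic, and that the order-$4$ computation says precisely that $(E_1)_v$ restricted to $\Lambda(v)$ realizes every even-weight pattern — with your parity check $((A+I)\sigma_0)(v)=1$ confirming the forced correction on $N[v]$ lies in the sum-zero subspace $W_0$. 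All of these steps are sound; the only point worth making explicit is that the order-$4$ fact was derived at one vertex, so you should invoke the $\Aut(\Lambda)$-invariance of $E_1$ and vertex-transitivity to transfer it to your chosen arc $(v,w)$. The paper instead constructs $\sigma$ explicitly: it pulls back the indicator vector of a single vertex of $\K_4$ along the tower of regular covering projections $\Lambda_n\to\MK\to\Q_3\to\K_4$, noting that both bullets are preserved under such pullbacks because the relevant automorphism groups lift at each stage; its $\sigma$ is the characteristic vector of the fibre above a vertex of $\K_4$. The trade-off: the paper's argument is concrete and exploits the specific covering structure of $\Lambda_n$, giving an explicit $\sigma$ with transparent support, whereas your argument is structural and more general — it applies verbatim to any connected cubic arc-transitive graph with vertex-transitive $\tB$ whose $1$-eigenspace over $\FF_2$ has dimension at least $2$, and your $\sigma$ in fact satisfies the first bullet with $\tB$ replaced by all of $\Aut(\Lambda)$.
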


\begin{proof}
First, consider the analogous claim where $\Lambda$ is replaced by the complete graph $\K_4$ and $\tB$ is replaced by $\Aut(\K_4)$. Clearly, the analogous claim holds: simply consider $\sigma_{\K_4}\in \FF_2^{\V(\K_4)}$ such that $\sigma_{\K_4}(v)$ is $1$ for a unique vertex $v$ of $\K_4$, and $0$ for the others.

Next, note that there is a regular covering projection $\p:\Q_3\to \K_4$ (where $\Q_3$ denotes the graph of the $3$-cube), and that $\Aut(\K_4)$ lifts to $\Aut(\Q_3)$ along this projection. We now define $\sigma_{\Q_3}\in \FF_2^{\V(\Q_3)}$ using this projection in the most natural way: $\sigma_{\Q_3}(v)=\sigma_{\K_4}(\p(v))$. One can check that $\sigma_{\Q_3}$ is $1$ on a pair of antipodal vertices of $\Q_3$, and $0$ elsewhere. One can also check that $\sigma_{\Q_3}$ satisfies an analog of the claim, with $\Lambda$ replaced by $\Q_3$ and $\tB$ replaced by $\Aut(\Q_3)$. This can be checked by brute force but, more importantly, it immediately follows from the definition of $\sigma_{\Q_3}$ together with the fact that the analogous claim holds for $\K_4$.

This last remark allows us to repeat this procedure: there is a natural covering projection $\MK\to \Q_3$ such that $\Aut(\Q_3)$ lifts to $B$ and we can use this to define $\sigma_{\MK}$ as earlier. Again, $\sigma_{\MK}$ satisfies an analog of the claim, with $\Lambda$ replaced by $\MK$ and $\tB$ replaced by $B$. Finally, we repeat this procedure one last time, using the covering projection $\Lambda_n\to \MK$, along which $B$ lifts to $\tB$, to obtain $\sigma=\sigma_{\Lambda_n}$ defined in the natural way.
\end{proof}

Let $\sigma$, $v$ and $w$ be as in the claim and let $\tau\in \tB\setminus \tA$ such that $\tau$ fixes $v$ but not $w$. (Such an element exists since $\tB$ is $2$-arc-regular while $\tA$ is $1$-arc-regular.) Let $G_3=\langle G_1,\tau\sigma\rangle=\langle E_1, \tA , \tau\sigma\rangle\leq \Aut(\Gamma)$. We first show that $G_1$ is normal in $G_3$. It suffices to show that $\sigma\tau$ normalises $G_1$. Note that $E_1$ is centralised by $\sigma$ and normalised by $\tau$. Since $\tA$ is normal in $\tB$, it is normalised by $\tau$. It thus remains only to show that $\tA^\sigma\leq G_1$. Let $a\in \tA$. By definition of $\sigma$, we have $a^{-1}\sigma a \sigma=\sigma^a\sigma\in E_1$ and thus $a^\sigma=\sigma a\sigma\in aE_1\subseteq G_1$, as required. This shows that $G_1$ is normal in $G_3$. Note that $(\tau\sigma)^2=\tau^2\sigma^\tau\sigma\in \tA E_1\subseteq G_1$. It follows that $|G_3:G_1|\leq 2$.

Note that $\tau\sigma$ fixes $(v,0)$. In fact, it is easy to see that $\tau\sigma$ fixes a neighbour of $(v,0)$ (and thus in fact at least two neighbours). On the other hand, it acts as a $4$-cycle on the remaining $4$ neighbours of  $(v,0)$.  By the remark in the introduction, it follows that the local action of $G_3$ must be $S_4(6c)$ and thus $|G_3|=2|G_1|=|G_2|$. This concludes the proof of Theorem~\ref{theo:main2}.

\bigskip

\textbf{Acknowledgments}
The authors gratefully acknowledge the support of the mathematical research institute MATRIX that hosted the ``Tutte Centenary Retreat'' in November 2017 where part of the research was done. The first and second listed authors would also like to thank the University of Auckland for the hospitality during their visits. This work of the first listed author is supported in part by the Slovenian Research Agency (research program P1-0404 and research projects J1-1691, J1-1694, J1-1695,  J1-9110, N1-0102, N1-0140  and N1-0062).
The second listed author is partially supported by the Slovenian Research Agency, research project J1-1691 and research program P1-0294.

\section{Appendix: {\sc magma} code that checks that $x_1$ is a $1$-eigenvector for $\Lambda$}\label{Appendix}

\begin{verbatim}
/* The function Delta(n) defines the graph denoted by \Delta_n in the paper.
The labels of the vertices differ from those in the paper: 
The first coordinate, which is an element of the group R in the paper,
is an element of Z_8xZ_2 in the code. The correspondence is as follows:
(0,0) <-> a
(1,0) <-> ac
(2,0) <-> abc
(3,0) <-> abz
(4,0) <-> az
(5,0) <-> acz
(6,0) <-> abcz
(7,0) <-> ab
(0,1) <-> id
(1,1) <-> c
(2,1) <-> bc
(3,1) <-> bz
(4,1) <-> z
(5,1) <-> cz
(6,1) <-> bcz
(7,1) <-> b */


Delta:=function(n)
V:={<i,j,x,y,z,w>:i in [0..7], j in [0..1], x in [0..n-1], 
y in [0..n-1],z in [0..n-1],w in [0..n-1]};
E:={{<i,0,x,y,z,w>,<i,1,x,y,z,w>}:i in [0..7], x in [0..n-1], 
y in [0..n-1],z in [0..n-1],w in [0..n-1]};
E:=E join {{<0,1,x,y,z,w>,<1,1,(x+1) mod n,y,z,w>}:x in [0..n-1], 
y in [0..n-1],z in [0..n-1],w in [0..n-1]};
E:=E join {{<1,1,x,y,z,w>,<2,1,(x) mod n,(y+1) mod n,z,w>}:x in [0..n-1], 
y in [0..n-1],z in [0..n-1],w in [0..n-1]};
E:=E join {{<2,1,x,y,z,w>,<3,1,(x) mod n,y,(z+1) mod n,w>}:x in [0..n-1], 
y in [0..n-1],z in [0..n-1],w in [0..n-1]};
E:=E join {{<3,1,x,y,z,w>,<4,1,(x) mod n,y,z,(w+1) mod n>}:x in [0..n-1], 
y in [0..n-1],z in [0..n-1],w in [0..n-1]};
E:=E join {{<4,1,x,y,z,w>,<5,1,(x-1) mod n,y,z,w>}:x in [0..n-1], 
y in [0..n-1],z in [0..n-1],w in [0..n-1]};
E:=E join {{<5,1,x,y,z,w>,<6,1,(x) mod n,(y-1) mod n,z,w>}:x in [0..n-1], 
y in [0..n-1],z in [0..n-1],w in [0..n-1]};
E:=E join {{<6,1,x,y,z,w>,<7,1,(x) mod n,y,(z-1) mod n,w>}:x in [0..n-1], 
y in [0..n-1],z in [0..n-1],w in [0..n-1]};
E:=E join {{<7,1,x,y,z,w>,<0,1,(x) mod n,y,z,(w-1) mod n>}:x in [0..n-1], 
y in [0..n-1],z in [0..n-1],w in [0..n-1]};
E:=E join {{<i,0,x,y,z,w>,<(i+3) mod 8,0,x,y,z,w>}:i in [0..7], x in [0..n-1], 
y in [0..n-1],z in [0..n-1],w in [0..n-1]};
X:=Graph<V|E>;
return(X);
end function;

/* The function IsEigenvector(F,X) has as input a graph X and a set of vertices F.
 It tests whether the support vector of F is a 1-eigenvector of X over GF(2). */

IsEigenvector:=function(F,X)
A,V,E:=AutomorphismGroup(X);
n:=Degree(A);
Z2:=IntegerRing(2);
A2:=Matrix(Z2,AdjacencyMatrix(X));
AI:=A2+IdentityMatrix(Z2,n);
ZeroVector:=AI[1]+AI[1];
sum:=ZeroVector;
for s in F do
sum:=sum + AI[Position(V,s)];
end for;
result:=sum eq ZeroVector;
return(result);
end function;

/* The set S1 corresponds to the set S_1 defined in the paper. 
The only difference is with the labelling of vertices, as in the first comment above. */

S1:={<0, 0, 0, 0, 1, 2 >,<0, 0, 0, 1, 2, 2 >,<0, 0, 1, 1, 1, 1 >,
<0, 0, 1, 2, 2, 1 >,<0, 1, 1, 0, 1, 1 >,<0, 1, 1, 1, 1, 1 >,
<0, 1, 1, 1, 2, 1 >,<0, 1, 1, 2, 2, 1 >,<1, 0, 0, 0, 1, 2 >,
<1, 0, 1, 1, 1, 1 >,<1, 0, 1, 1, 2, 2 >,<1, 0, 2, 2, 2, 1 >,
<1, 1, 1, 0, 1, 1 >,<1, 1, 1, 0, 1, 2 >,<1, 1, 1, 1, 1, 1 >,
<1, 1, 1, 1, 2, 2 >,<1, 1, 2, 1, 1, 1 >,<1, 1, 2, 1, 2, 1 >,
<2, 0, 0, 1, 1, 2 >,<2, 0, 1, 1, 1, 1 >,<2, 0, 1, 2, 2, 2 >,
<2, 0, 2, 2, 2, 1 >,<2, 1, 0, 1, 1, 2 >,<2, 1, 1, 1, 1, 1 >,
<2, 1, 1, 1, 1, 2 >,<2, 1, 2, 1, 1, 1 >,<3, 0, 0, 1, 2, 2 >,
<3, 0, 1, 0, 1, 1 >,<3, 0, 1, 1, 1, 1 >,<3, 0, 1, 1, 2, 1 >,
<3, 0, 1, 1, 2, 2 >,<3, 0, 2, 2, 2, 1 >,<3, 1, 1, 1, 2, 1 >,
<3, 1, 1, 2, 2, 1 >,<3, 1, 2, 1, 2, 1 >,<3, 1, 2, 2, 2, 1 >,
<4, 0, 1, 0, 1, 2 >,<4, 0, 1, 1, 2, 2 >,<4, 0, 2, 1, 1, 1 >,
<4, 0, 2, 2, 2, 1 >,<4, 1, 1, 0, 1, 2 >,<4, 1, 1, 1, 1, 2 >,
<4, 1, 1, 1, 2, 2 >,<4, 1, 1, 2, 2, 2 >,<5, 0, 0, 0, 1, 2 >,
<5, 0, 1, 1, 1, 1 >,<5, 0, 1, 1, 2, 2 >,<5, 0, 2, 2, 2, 1 >,
<5, 1, 0, 1, 1, 2 >,<5, 1, 0, 1, 2, 2 >,<5, 1, 1, 1, 1, 1 >,
<5, 1, 1, 1, 2, 2 >,<5, 1, 1, 2, 2, 1 >,<5, 1, 1, 2, 2, 2 >,
<6, 0, 0, 0, 1, 2 >,<6, 0, 1, 0, 1, 1 >,<6, 0, 1, 1, 2, 2 >,
<6, 0, 2, 1, 2, 1 >,<6, 1, 0, 1, 2, 2 >,<6, 1, 1, 1, 2, 1 >,
<6, 1, 1, 1, 2, 2 >,<6, 1, 2, 1, 2, 1 >,<7, 0, 0, 0, 1, 2 >,
<7, 0, 1, 1, 1, 1 >,<7, 0, 1, 1, 1, 2 >,<7, 0, 1, 1, 2, 2 >,
<7, 0, 1, 2, 2, 2 >,<7, 0, 2, 1, 1, 1 >,<7, 1, 0, 0, 1, 2 >,
<7, 1, 0, 1, 1, 2 >,<7, 1, 1, 0, 1, 2 >,<7, 1, 1, 1, 1, 2 >};

/* We test if S1 is a 1-eigenvector for the graph Delta(4). */
IsEigenvector(S1,Delta(4));
\end{verbatim}


\begin{thebibliography}{99}
\begin{footnotesize}

 
\bibitem{mag} W.~Bosma, C.~Cannon and C.~Playoust, The {\sc Magma} algebra system I: The user language, {\em J.\ Symbolic Comput.} {\bf 24} (1997), 235--265.

\bibitem{caprace} P.-E.\ Caprace and A.\ Le Boudec, Bounding the covolume of lattices in products, {\em Compos.\ Math.} {\bf 155} (2019),  2296--2333.
 
\bibitem{newlist}  M. Conder, List of all connected trivalent (cubic) symmetric graphs on up to 10000 vertices, \newline {\tt http://www.math.auckland.ac.nz/$\sim$conder/symmcubic10000list.txt}.


\bibitem{ConDob02}  M.~Conder and P.~Dobcs\'anyi, Trivalent symmetric graphs on up to 768 vertices, {\em J.~Combin.~Math.~Combin.~Comput.} {\bf 40} (2002), 41--63.


\bibitem{ConLiPot} M.\ Conder, C.-H.\ Li and P.\ Poto\v{c}nik,  On the orders of arc-transitive graphs, {\em J.\ Algebra} {\bf 421} (2015), 167--186.


\bibitem{cubicSS} M.\ Conder, A.\ Malni\v c, D.\ Maru\v si\v c and P.\ Poto\v cnik, A census of cubic semisymmetric graphs on up to $768$ vertices,      {\em      J.\ Alg.\ Combin.} {\bf 23} (2006), 255--294.
      
\bibitem{ConNed} M.\ Conder and R.\; Nedela, A refined classification of symmetric cubic graphs, {\em J.\ Algebra} {\bf 322} (2009),  722--740.

\bibitem{conway} J.~H.~Conway, A.~Hulpke and J.\ McKay, On transitive permutation groups, \textit{London Math.\  Soc.\ J.\ Comp.\ Math.} {\bf 1} (1998), 1--8.



\bibitem{GiuMor1}  M.\ Giudici and L.\ Morgan, A class of semiprimitive groups that are graph-restrictive, {\em Bull.\ Lond.\ Math.\ Soc.} {\bf 46} (2014), 1226--1236.

\bibitem{GiuMor2}   M.\ Giudici and L.\ Morgan, On locally semiprimitive graphs and a theorem of Weiss, {\em J.\ Algebra} {\bf 427} (2015), 104--117.



            
\bibitem{gp83}   A.~Malni\v c, D.~Maru\v si\v c, \v{S}.\ Miklavi\v{c} and P.~Poto\v cnik, Semisymmetric elementary abelian covers of the M\"obius-Kantor graph, {\em Disc.\ Math.} {\bf 307} (2007), 2156--2175.

\bibitem{elemab}   A.~Malni\v c, D.~Maru\v si\v c and P.~Poto\v cnik, Elementary abelian covers of graphs, {\em J.~Algebraic~Combin.}  {\bf 20}  (2004),  71--97.

\bibitem{MorSVer} L.\ Morgan, E.\ Swartz and G.\ Verret, On 2-arc-transitive graphs of order $kp^n$, {\em J.\ Combin.\ Theory Ser.\ B} {\bf 117}  (2015), 77--87.



\bibitem{lifts} P.\ Poto\v{c}nik and P.\ Spiga,  Lifting a prescribed group of automorphisms of graphs, {\em Proc.\ Amer.\ Math.\ Soc.} {\bf 147} (2019),  3787--3796.

\bibitem{nullity} P.\ Poto\v{c}nik, P.\ Spiga  and G.\ Verret, On the nullspace of arc-transitive graphs over finite fields, {\em  J.\ Algebraic Combin.} \textbf{36} (2012), 389--401. 

\bibitem{restrictive} P.\ Poto\v{c}nik, P.\ Spiga and G.\ Verret, On graph-restrictive permutation groups, {\em J.\ Combin.\ Theory, Ser.\ B} {\bf 102} (2012),  820--831.

\bibitem{CubicCensus}  P.\ Poto\v{c}nik, P.\ Spiga and G.\ Verret,  Cubic vertex-transitive graphs on up to $1280$ vertices,          {\em    J.\ Symbolic Comp.}  {\bf 50} (2013), 465--477.

\bibitem{frestrictive} P.\ Poto\v{c}nik, P.\ Spiga  and G.\ Verret, On the order of arc-stabilisers in arc-transitive graphs with prescribed local group, {\em Trans.\ Amer.\ Math.\ Soc.} \textbf{366} (2014), 3729--3745. 

\bibitem{lost}  P.\ Poto\v{c}nik, P.\ Spiga and G.\ Verret, Bounding the order of the vertex-stabiliser in $3$-valent vertex-transitive and $4$-valent arc-transitive graphs,         {\em J.\ Combin.\ Theory, Ser. B.} {\bf 111} (2015), 148--180.

\bibitem{HATcensus} P.\ Poto\v{c}nik, P.\ Spiga and G.\ Verret,  A census of $4$-valent half-arc-transitive graphs and arc-transitive digraphs of valence two, {\em Ars Math.\ Contemp.} {\bf 8} (2015), 133--148.


\bibitem{twogroups} P.\ Poto\v{c}nik, P.\ Spiga and G.\ Verret, Groups of order at most 6,000 generated by two elements, one of which is an involution, and related structures (in ``Symmetries in graphs, maps, and polytopes''), {\em Springer Proc.\ Math.\ Stat.}, {\bf 159}, Springer, (2016),  273--286.

\bibitem{enum} P.\ Poto\v{c}nik, P.\ Spiga and G.\ Verret,  Asymptotic enumeration of vertex-transitive graphs of fixed valence,    {\em J.\ Combin.\ Theory Ser.\ B} {\bf 122} (2017), 221--240.











\bibitem{recipe}
P.\ Poto\v{c}nik and S.\ Wilson,
Recipes for Edge-Transitive Tetravalent Graphs, {\em arXiv:1608.04158v1}; to appear in 
{\em Art of Discrete and Applied Mathematics}.


\bibitem{pablo2}
P.\ Spiga, 
Semiregular elements in cubic vertex-transitive graphs and the restricted Burnside problem,
{\em Math.\ Proc.\ Cambridge Philos.\ Soc.} {\bf 157} (2014),  45--61.


\bibitem{pablo} P.\ Spiga,  An application of the local $C(G,T)$ theorem to a conjecture of Weiss,
{\em Bull.\ Lond.\ Math.\ Soc.} {\bf 48} (2016), 12--18.



\bibitem{Tor}
S.\ Tornier,
Groups acting on trees with prescribed local action,
{\em arXiv:2002.09876v1}.

\bibitem{T48} W. T. Tutte, A family of cubical graphs, {\em Proc. Cambridge Phil. Soc.}, {\bf 43} (1948), 459--474.     
\bibitem{Verret} G.~Verret, On the order of arc-stabilizers in arc-transitive graphs, \textit{Bull. Australian Math. Soc.} \textbf{80}  (2009), 498--505.   

\bibitem{Verret2}G.~Verret, On the order of arc-stabilisers in arc-transitive graphs, II, \textit{Bull. Australian Math. Soc.} \textbf{87} (2013), 441--447. 

\bibitem{Weiss}  R.~Weiss, $s$-transitive graphs, \textit{Colloq. Math. Soc. J\'{a}nos Bolyai} \textbf{25} (1978), 827--847.           

\bibitem{five}
D.-W.\ Yang, J.\ Kwak and J.\ Lee, Jaeun, On basic graphs of symmetric graphs of valency five,
{\em European J.\ Combin.} {\bf 80} (2019), 236--246.


%
\end{footnotesize}
\end{thebibliography}
\end{document}